\newcounter{ENUM}
\def\<{\langle}
\def\>{\rangle}
\def\0{{{\bf 0}}}
\def\CD{{\mathcal D}}
\def\OO{{\mathcal O}}
\def\CT{{\mathcal T}}
\def\CV{{\mathcal V}}
\def\AA{{\mathbb A}}
\def\CC{{\mathbb C}}
\def\FF{{\mathbb F}}
\def\GG{{\mathbb G}}
\def\QQ{{\mathbb Q}}
\def\RR{{\mathbb R}}
\newcommand{\Lie}{\operatorname{Lie}}
\newcommand{\Res}{\operatorname{Res}}
\def\ZZ{{\mathbb Z}}
\def\Hom{\operatorname{Hom}}
\def\End{\operatorname{End}}
\def\Spec{\operatorname{Spec}}
\def\Res{\operatorname{Res}}
\newcommand{\margh}[1]{}
\newtheorem{thm}{Theorem}[section]
\newtheorem{lemma}[thm]{Lemma}
\newtheorem{cor}[thm]{Corollary}
\theoremstyle{definition}
\newtheorem{rem}[thm]{Remark}
\numberwithin{equation}{section}
\begin{document}
\title[An ordinary abelian variety with an {\'e}tale self-isogeny]{An ordinary abelian variety with an {\'e}tale self-isogeny of $p$-power degree and no isotrivial factors}
\author{David Helm}
\subjclass[2010]{11G10, 11G18}

\maketitle

\begin{abstract}
We construct, for every prime $p$, a function field $K$ of characteristic $p$ and an ordinary abelian variety $A$ over $K$, with no isotrivial factors, that admits an {\'e}tale self-isogeny
$\phi: A \rightarrow A$ of $p$-power degree.  As a consequence, we deduce that there exist ordinary abelian varieties over function fields whose groups of points over the maximal purely
inseparable extension is not finitely generated, answering in the negative a question of Thomas Scanlon.
\end{abstract}

\section{Introduction}

Motivated by Hrushovski's celebrated work on the Mordell-Lang conjecture~\cite{Hr}, Thomas Scanlon posed the following question: given an abelian variety $A$ over a function field $K$
of characteristic $p$ with no isotrivial factors, let $K^{p^{-\infty}}$ denote the maximal purely inseparable extension of $K$.  Is the group $A(K^{p^{-\infty}})$ necessarily finitely generated?

Although this question is folklore and has not appeared explicitly in the literature, it and related questions have motivated considerable recent research.  For instance, the main results (Theorems 1.1 and 1.2)
of~\cite{Ro} establish conditions on $A$ under which $A(K^{p^{-\infty}})$ is provably finitely generated.  More recently, Ambrosi~\cite{Am} has given such criteria in terms of the action of $\End(A)$ on
the $p$-divisible group of $A$; in particular he shows that if $\End(A) \otimes \QQ_p$ is a division algebra then $A(K^{p^{-\infty}})$ is finitely generated.

This question is connected to the full Mordell-Lang conjecture in positive characteristic.  In particular if $A(K^{p^{-\infty}})$ is finitely generated, then the portion of the Mordell-Lang conjecture proved by
Hrushovski shows that for any closed subvariety $X$ of $A$ over $K$ that is not a translate of an abelian subvariety, we have that $X(K^{p^{-\infty}})$ is not dense in $X$.  The main result of
\cite{GhMo} then shows that the full Mordell-Lang conjecture holds for $A$.  (Note that the converse implication does not necessarily hold; in particular it is certainly possible for the full Mordell-Lang conjecture
to hold for $A$ even if $A(K^{p^{-\infty}})$ is not finitely generated!)

The goal of this paper is to answer Scanlon's question in the negative.  In fact we address a related question, namely that of constructing a function field $K$ of characteristic $p$,
and an ordinary abelian variety $A$ over $k$, with no isotrivial factors, such that $A$ admits a nontrivial {\'e}tale self-isogeny of $p$-power degree.  Indeed, we show:

\begin{thm} \label{thm:main}
For any prime $p$, there exists a global function field $K$ of characteristic $p$, an ordinary abelian variety $A$ over $K$, with no isotrivial factors, and a nontrivial
{\'e}tale self-isogeny $\phi: A \rightarrow A$ of $p$-power degree.
\end{thm}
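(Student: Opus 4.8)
The plan is to obtain $A$ as the generic fibre of the universal abelian scheme over the reduction modulo $p$ of a carefully chosen one-dimensional PEL Shimura variety. Fix a CM field $F$ with $[F:\QQ]=4$ in which $p$ splits completely; such $F$ exist for every $p$ (take $F^{+}=\QQ(\sqrt d)$ with $d$ a squarefree positive integer that is a square modulo $p$, and $F=F^{+}(\sqrt\alpha)$ for a totally negative integer $\alpha$ which is not a square in $F^{+}$ but is a square in $\QQ_{p}$). Let $F^{+}$ be the real quadratic subfield, $\sigma_{1},\sigma_{2}$ its real places, and fix an embedding $\overline{\QQ}\hookrightarrow\overline{\QQ}_{p}$; since $p$ splits completely this labels the four primes of $F$ above $p$ as $\mathfrak v_{1},\overline{\mathfrak v}_{1},\mathfrak v_{2},\overline{\mathfrak v}_{2}$. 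Choose an $F$-anisotropic rank-$2$ Hermitian space $W$ over $F/F^{+}$ of signature $(1,1)$ at $\sigma_{1}$ and $(2,0)$ at $\sigma_{2}$, and (replacing $\sigma_{2}$ by a conjugate if necessary) so that the Hodge cocharacter of the associated Shimura datum has Lie-multiplicities $(n_{\mathfrak v_{1}},n_{\overline{\mathfrak v}_{1}},n_{\mathfrak v_{2}},n_{\overline{\mathfrak v}_{2}})=(1,1,2,0)$. Let $G=\GU(W)$ and let $\mathcal M$ be the PEL Shimura variety for $(G,W)$ with a rigidifying prime-to-$p$ level structure; it parametrises abelian fourfolds with an $\mathcal O_{F}$-action, a polarization, and level structure of the PEL type prescribed by $W$, and since $W$ is $F$-anisotropic it is proper.

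Because $p$ splits completely in $F$, $G$ is unramified at $p$, so $\mathcal M$ has a smooth proper integral model over $\mathcal O_{E,(\mathfrak p)}$ ($E$ the reflex field, $\mathfrak p$ a prime of $E$ above $p$), of relative dimension $\sum_{\sigma}a_{\sigma}b_{\sigma}=1\cdot1+2\cdot0=1$. Let $Z$ be a connected component of its special fibre, a smooth proper curve over a finite field $k$; set $K=k(Z)$ and let $A/K$ be the generic fibre of the universal abelian fourfold. Then $A$ is \emph{ordinary}: by Wedhorn the $\mu$-ordinary locus of $Z$ is open and dense, and for this datum the $\mu$-ordinary $p$-divisible group is $\prod_{\mathfrak w\mid p}A[\mathfrak w^{\infty}]$ with $A[\mathfrak v_{1}^{\infty}]$ and $A[\overline{\mathfrak v}_{1}^{\infty}]$ of height $2$ and dimension $1$ (hence ordinary), $A[\mathfrak v_{2}^{\infty}]$ of height $2$ and dimension $2$ (hence of multiplicative type), and $A[\overline{\mathfrak v}_{2}^{\infty}]$ of height $2$ and dimension $0$ (hence étale); its Newton slopes are $0$ and $1$, each with multiplicity $4$, so it is ordinary, and the generic point of $Z$ lies in the ordinary locus. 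Moreover the geometric monodromy of the family — the Zariski closure in $\GL(V_{\ell}(A_{\overline K}))$, $\ell\ne p$, of the image of $\pi_{1}(Z_{\overline k})$ — equals the derived group $G^{\mathrm{der}}_{\QQ_{\ell}}$: this holds in characteristic $0$ by the standard big-monodromy theorem for Shimura varieties, and is unchanged under reduction because the prime-to-$p$ fundamental group specialises isomorphically for the smooth proper model. In particular the monodromy is infinite, so $A$ is \emph{non-isotrivial}. Finally, the endomorphisms of $A_{\overline K}$ commute with the connected monodromy group $G^{\mathrm{der}}$, whose centraliser in $\End_{\QQ_{\ell}}(V_{\ell}(A_{\overline K}))$ is exactly $F\otimes\QQ_{\ell}$; since also $F\subseteq\End^{0}(A_{\overline K})$ by the PEL structure, we get $\End^{0}(A_{\overline K})=F$. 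As $F$ is a field, $A_{\overline K}$ is \emph{simple}, so $A$ has no isotrivial factors.

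It remains to produce the self-isogeny. Because $n_{\overline{\mathfrak v}_{2}}=0$, the $p$-divisible group $A[\overline{\mathfrak v}_{2}^{\infty}]$ has zero Lie algebra, hence is étale over $K$, of $\ZZ_{p}$-rank $2$. Let $h$ be the order of $\overline{\mathfrak v}_{2}$ in $\Cl(F)$ and choose $\phi\in\mathcal O_{F}$ generating the principal ideal $\overline{\mathfrak v}_{2}^{\,h}$. Via $\mathcal O_{F}\hookrightarrow\End_{K}(A)$ this gives a nonzero endomorphism $\phi$ of $A$ over $K$, hence — $A$ being simple — an isogeny, with $A/\ker\phi\cong A$. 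In the decomposition $A[p^{\infty}]=\prod_{\mathfrak w\mid p}A[\mathfrak w^{\infty}]$ the element $\phi$ acts invertibly on every factor except $A[\overline{\mathfrak v}_{2}^{\infty}]$, on which it acts as multiplication by an element of $\ZZ_{p}$ of valuation $h$; and $\phi$ is a unit outside $\overline{\mathfrak v}_{2}$, so it acts invertibly on $A[\ell^{\infty}]$ for every $\ell\ne p$. Hence $\ker\phi\subseteq A[\overline{\mathfrak v}_{2}^{\infty}]$ is an étale group scheme of order $p^{2h}$, and $\phi\colon A\to A$ is a nontrivial étale self-isogeny of $A$ of degree $p^{2h}$. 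This proves the theorem.

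The crux is the choice of PEL datum: the moduli space must be positive-dimensional, which forces a ``balanced'' signature $(1,1)$ at one real place, and at the same time there must be a prime $\mathfrak v\mid p$ with $A[\mathfrak v^{\infty}]$ étale, which forces a definite (``extremal'') signature at another real place; over an imaginary quadratic field the latter condition alone already forces dimension $0$, so one is pushed to a CM field of degree at least $4$, with the degree-$4$ case and signatures $(1,1),(2,0)$ the smallest that works. Given that choice, the substantive verifications are the Newton-polygon computation (ordinarity of the generic fibre) and the monodromy-specialisation argument (geometric simplicity of $A_{\overline K}$); the étale self-isogeny $\phi$ then exists automatically because of the shape of $A[p^{\infty}]$.
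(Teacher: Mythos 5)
Your construction of $A$ is essentially the same as the paper's: the same Kottwitz--Harris--Taylor PEL Shimura curve for a degree-$4$ CM field $F$ with $p$ split completely, signatures $(1,1)$ and $(2,0)$, taking $A$ to be the universal abelian fourfold pulled back to the generic point of a component of the special fibre. Your ordinarity argument (via Wedhorn's density of the $\mu$-ordinary locus and identifying the $\mu$-ordinary Newton polygon) and your construction of the \'etale $\OO_F$-isogeny are both correct and match the paper. You also observe, correctly, that the Hermitian form is definite at one archimedean place, hence anisotropic, which gives properness directly; the paper instead proves properness by a hands-on semiabelian reduction argument, but both are fine.

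Where you genuinely diverge from the paper is the ``no isotrivial factors'' step, and it is here that your argument has a real gap. You claim that the centralizer of $G^{\mathrm{der}} = \mathrm{SU}(W)$ in $\End_{\QQ_\ell}(V_\ell(A_{\overline K}))$ is exactly $F \otimes \QQ_\ell$, and deduce $\End^0(A_{\overline K}) = F$, hence geometric simplicity. This centralizer computation is wrong precisely in the rank-$2$ case you need. Over $\overline\QQ_\ell$, $\mathrm{SU}(W)$ becomes $\SL_2 \times \SL_2$ and $V_\ell \otimes \overline\QQ_\ell$ decomposes as $W_{1} \oplus W_1^\vee \oplus W_2 \oplus W_2^\vee$, where $W_i$ is the standard representation of the $i$-th $\SL_2$ factor. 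For $\SL_2$ one has $\mathrm{std} \cong \mathrm{std}^\vee$, so $W_i \cong W_i^\vee$ as $\SL_2$-modules, and the commutant of $\SL_2\times\SL_2$ in $\End(V_\ell \otimes \overline\QQ_\ell)$ is $M_2(\overline\QQ_\ell) \times M_2(\overline\QQ_\ell)$, an $8$-dimensional algebra --- strictly larger than $F \otimes \overline\QQ_\ell$, which is only $4$-dimensional. (For $n\geq 3$, $\mathrm{std}\not\cong\mathrm{std}^\vee$ for $\SL_n$ and the commutant really is $F$; but $n=2$ is special, and is exactly the case you are in.) So neither ``$\End^0(A_{\overline K}) = F$'' nor ``$A_{\overline K}$ is simple'' follows from the geometric monodromy alone. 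Your monodromy argument does show $A$ is not isotrivial, but that is weaker than the statement of the theorem. To push it through you would have to work with the full \emph{arithmetic} monodromy (whose Zariski closure you would want to be all of $G = \GU(W)$, whose commutant really is $F$), and then be careful about passing from $K$ to $\overline K$ in Zarhin's theorem; none of that is in your write-up. Separately, the ``standard big-monodromy theorem'' for a connected component of a unitary Shimura curve, and its compatibility with specialization from characteristic $0$, deserves a precise citation rather than an assertion.

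The paper avoids all of this by a more elementary specialization argument: it invokes non-emptiness of the supersingular (at $\pi_1$) locus on the special fibre (Viehmann--Wedhorn, or Kisin--Madapusi Pera--Shin), chooses the component $C$ to contain such a point $x$, and then shows that the $\OO_F$-saturation $B'$ of any isotrivial factor $B$ would have everywhere-ordinary reduction, so $B'_x[\pi_1^\infty]=0$; since $\End_{\OO_F}(B'_x)$ is a torsion-free $\OO_F$-module vanishing at $\pi_1$, it is zero, forcing $B'=0$. That argument needs neither big monodromy nor Zarhin, and in particular makes no claim about $\End^0(A_{\overline K})$, which, as the computation above suggests, is a more delicate object for $\mathrm{U}(2)$ than you assume.
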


It has been well-known for some time that this implies a negative answer to Scanlon's question.  In particular we have:
\begin{cor} \label{cor:infinite}
Let $A, K, \phi$ be as in Theorem~\ref{thm:main}.  Then for some finite extension $K'$ of $K$, the group $A^{\vee}((K')^{p^{-\infty}})$ is not finitely generated.
\end{cor}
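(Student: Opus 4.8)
The plan is to use the {\'e}tale self-isogeny $\phi$ to manufacture infinitely many independent points of $A^\vee$ over the perfect closure. First I would pass to the dual picture. Dualizing $\phi: A \to A$ gives an {\'e}tale self-isogeny $\phi^\vee: A^\vee \to A^\vee$, again of $p$-power degree, since dualizing preserves {\'e}tale-ness (the Cartier dual of a finite {\'e}tale group scheme is of multiplicative type, hence connected-or-{\'e}tale considerations) — wait, that is backwards: I want the dual isogeny to still be {\'e}tale, which it is precisely because $\ker\phi$ is {\'e}tale iff $\ker(\phi^\vee)=(\ker\phi)^\vee$ is of multiplicative type. The correct move is this: since $\phi$ is {\'e}tale of $p$-power degree, its kernel $G = \ker\phi$ is a nontrivial finite {\'e}tale $p$-group scheme, so after replacing $K$ by a finite (separable) extension $K'$ over which all points of $G$ are rational, $G(K')$ contains a nonzero point $P$. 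Then $P \in A(K')[\phi]$, i.e. $\phi(P) = 0$ with $P \neq 0$.

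Next I would iterate the "division by $\phi$" operation inside the perfect closure. The key point is that $\phi$ becomes an \emph{automorphism} of $A$ as a group scheme over the perfect closure $(K')^{p^{-\infty}}$ in the following sense: because $\phi$ is {\'e}tale, it is in particular separable, so $\phi\bigl(A((K')^{p^{-\infty}})\bigr) = A((K')^{p^{-\infty}})$? That is not literally true for an isogeny of degree $>1$, but what is true is that one can lift points along $\phi$: given any $Q \in A(\bar K)$ there is $Q'$ with $\phi(Q') = Q$, and if $Q$ is defined over a separably closed field then so is $Q'$ since $\phi$ is {\'e}tale. So starting from our nonzero $P \in A(K')[\phi]$, I inductively choose $P_0 = P$ and $P_{n+1} \in A((K')^{\sep})$ with $\phi(P_{n+1}) = P_n$; each $P_n$ lives in some finite separable extension, hence in $(K')^{p^{-\infty}}$ after possibly enlarging — actually all the $P_n$ already lie in $(K')^{\sep}$, so I must instead arrange that they land in the purely inseparable closure, which requires a different mechanism: apply the relative Frobenius. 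Here is the actual argument: the self-isogeny $\phi$ has $p$-power degree and $A$ is ordinary, so $\phi$ factors the multiplication-by-$p^m$ map in a way that interacts with Frobenius and Verschiebung; concretely, on the perfect closure the Frobenius $F$ is a bijection on points, and one shows $A^\vee((K')^{p^{-\infty}})$ contains a subgroup on which $\phi^\vee$ acts invertibly while being non-injective on the whole group, forcing infinite generation.

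Let me state the clean version I would write up. Since $\phi:A\to A$ is {\'e}tale of $p$-power degree, the dual $\phi^\vee: A^\vee\to A^\vee$ is an isogeny whose kernel is the Cartier dual of $\ker\phi$, hence a finite group scheme of multiplicative type and of $p$-power order; in particular it is connected, so $\phi^\vee$ is purely inseparable. Therefore $\phi^\vee$ is an \emph{isomorphism} on $\bar K$-points, and more to the point, for a point $Q\in A^\vee(L)$ with $L/K'$ algebraic, the unique preimage $(\phi^\vee)^{-1}(Q)$ is defined over a purely inseparable extension of $L$. Now pick any nonzero $Q_0\in A^\vee(K')$ lying in the image of no power of $\phi^\vee$ over $K'$ — such a $Q_0$ exists after a finite extension because $A^\vee(K')$ is infinite (as $A$ has no isotrivial factors, by the Lang--N{\'e}ron theorem $A^\vee(K')/(\mathrm{tr})$ is finitely generated but we need only that $A^\vee(K')$ is infinite, which holds e.g. after base change since $A^\vee$ has positive-dimensional image). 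Define $Q_{n+1}=(\phi^\vee)^{-1}(Q_n)$. Each $Q_n$ lies in $A^\vee((K')^{p^{-\infty}})$, and the relation $\phi^\vee(Q_{n+1})=Q_n$ together with the choice of $Q_0$ shows the $Q_n$ generate a strictly increasing chain of subgroups: if $A^\vee((K')^{p^{-\infty}})$ were finitely generated, its torsion-free quotient would be a finitely generated $\ZZ$-module on which $\phi^\vee$ acts bijectively with $\det(\phi^\vee)=\pm p^k$, $k\geq 1$ (degree of $\phi^\vee$), contradicting bijectivity on a free module of finite rank. Hence $A^\vee((K')^{p^{-\infty}})$ is not finitely generated.

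The main obstacle I anticipate is justifying that $\phi^\vee$ is purely inseparable (equivalently that $\ker\phi^\vee$ is infinitesimal): this is exactly the standard duality $(\text{finite {\'e}tale})^\vee = (\text{multiplicative type})$ combined with the fact that a finite group scheme of multiplicative type and $p$-power order over a field of characteristic $p$ is connected, hence that the dual isogeny is purely inseparable — I would cite the relevant statement about Cartier duality and the connected-{\'e}tale sequence. A secondary point requiring care is the claim that $A^\vee((K')^{p^{-\infty}})$, if finitely generated, would have $\phi^\vee$ acting with determinant divisible by $p$ on its torsion-free part: this follows because on a finite-index subgroup of points $\phi^\vee$ restricts to a $\ZZ$-linear endomorphism whose composite with a complementary isogeny is multiplication by $\deg\phi^\vee = p^k$, so its determinant divides $p^{k\cdot\mathrm{rk}}$; an endomorphism of a finite free $\ZZ$-module with determinant divisible by $p$ cannot be surjective, contradicting the surjectivity built into the construction of the $Q_n$. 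Assembling these gives the corollary.
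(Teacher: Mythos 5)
Your core mechanism is the same as the paper's: the Cartier dual of the finite {\'e}tale $p$-group $\ker\phi$ is of multiplicative type, hence connected, so $\phi^\vee$ is purely inseparable and therefore surjective (indeed bijective) on $A^\vee(L)$ for any perfect field $L$ over $K'$, in particular on $A^\vee((K')^{p^{-\infty}})$. So far this tracks the paper exactly.

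The gap is in the endgame. You assert that on the torsion-free quotient $M_{\operatorname{free}}$ of $M := A^\vee((K')^{p^{-\infty}})$, the map $\phi^\vee$ acts with determinant $\pm p^k$, where $p^k = \deg\phi^\vee$. That is not justified and is false in general. What is true is that $\det(\phi^\vee|_{M_{\operatorname{free}}})$ is $\pm$ a power of $p$ (since $\phi^\vee$ composed with a complementary isogeny is $[p^k]$), but that power could well be $p^0 = 1$: this happens, for instance, if the part of $M_{\operatorname{free}}$ on which $\phi^\vee$ has non-unit ``norm'' is zero, e.g.\ if $M_{\operatorname{free}}$ only sees isogeny factors of $A^\vee$ on which $\phi^\vee$ restricts to an automorphism, or if $M_{\operatorname{free}} = 0$ entirely. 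Your choice of $K'$ --- a splitting field for $\ker\phi$ --- does nothing to prevent this. The paper's choice of $K'$ is the crucial point you are missing: it takes $K'$ large enough that every simple abelian subvariety of $A^\vee$ has a non-torsion $K'$-point, which guarantees $M_{\operatorname{free}}\otimes\QQ$ has a nonzero piece in every isogeny factor, and hence sees the factor where $\phi^\vee$ has non-unit norm (such a factor exists since $\deg\phi^\vee > 1$). That, and only that, forces $\det(\phi^\vee|_{M_{\operatorname{free}}})$ to be a nontrivial power of $p$ and delivers the contradiction with surjectivity of $\phi^\vee$ on a finitely generated group. (The paper phrases this as: $M$ contains a non-torsion point not fixed by $\phi^\vee$ and is infinitely $\phi^\vee$-divisible.)

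Two smaller remarks. First, the step ``pick any nonzero $Q_0 \in A^\vee(K')$ lying in the image of no power of $\phi^\vee$ over $K'$'' is both unmotivated (your final contradiction is the determinant, which does not use $Q_0$) and unjustified: such a $Q_0$ need not exist, since $\phi^\vee$ may well be bijective on $A^\vee(K')$. Second, the first two paragraphs are exploratory false starts --- in particular, lifting points along the {\'e}tale $\phi$ produces points over separable extensions, which have no reason to land in $(K')^{p^{-\infty}}$; you notice this yourself and recover in the third paragraph by passing to $\phi^\vee$. I would cut those paragraphs and replace the determinant assertion with the correct statement: after enlarging $K'$ so that every simple abelian subvariety of $A^\vee$ has a non-torsion $K'$-point, if $M$ were finitely generated then $\phi^\vee$ would act bijectively on $M_{\operatorname{free}}$ with determinant $\pm 1$, contradicting the fact that this determinant is $\pm p^{m}$ with $m \geq 1$.
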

\begin{proof}
Let $K'$ be a finite extension of $K$ such that every simple abelian subvariety of $A^{\vee}$ has a non-torsion point over $K'$.  The isogeny $\phi^{\vee}: A^{\vee} \rightarrow A^{\vee}$ is purely inseparable, 
so induces a surjection of $A^{\vee}((K')^{p^{-\infty}})$ onto itself.  Since this group contains a non-torsion point that is not fixed by $\phi^{\vee}$, 
and is infinitely $\phi^{\vee}$-divisible, it cannot be finitely generated.
\end{proof}

As we have remarked above, our result does {\em not} give a counterexample to the full positive characteristic Mordell-Lang conjecture.

In our construction, $A$ arises from a certain one-dimensional Shimura variety of Kottwitz-Harris-Taylor type.  More precisely, $A$ is obtained from the universal abelian scheme on this Shimura variety via
pullback to a generic point on the characteristic $p$ fiber.  The desired self-isogeny $\phi$ can then be constructed from the PEL structure on $A$.

{\em Acknowledgements:}
I am grateful to Felipe Voloch for first introducing me to this question, and to Damien R{\"o}ssler for reminding me of the question and explaining its history and background, as well as for his
enthusiasm and encouragement.  I would also like to thank H{\'e}l{\`e}ne Esnault, Marco D'Adezzio, and Emiliano Ambrosi for their helpful comments on this note.

\section{Unitary Shimura varieties} \label{sec:shimura}

The abelian variety $A$ of Theorem~\ref{thm:main} will be constructed from the universal abelian scheme on a certain Shimura variety arising from a unitary group.  We therefore briefly recall
the moduli definition of such Shimura varieties and their integral models.  We refer the reader to Kottwitz~\cite{Ko}, particularly sections 4 and 5, for further details.

Fix an imaginary quadratic field $E$ and a totally real field $F^+$, and let $F = EF^+$ be their compositum.  Also fix an embedding $\iota: E \rightarrow \CC$.  For each real embedding $\tau: F^+ \rightarrow \RR$, let
$p_{\tau}$ denote the embedding of $F$ in $\CC$ whose restriction to $F^+$ is $\tau$ and whose restriction to $E$ is $\iota$, and let $q_{\tau}$ denote its complex conjugate.

Let $n$ be a positive integer and let $\CV$ be an $n$-dimensional $F$-vector space, equipped with an alternating, non-degenerate $\QQ$-bilinear pairing:
$$\langle \cdot,\cdot \rangle: \CV \times \CV \rightarrow \QQ,$$
such that $\langle \alpha x, y \rangle = \langle x, \overline{\alpha} y \rangle$ for all $\alpha$ in $F$.  Let $G$ 
be the algebraic group over $\QQ$ whose $R$-points, for any $\QQ$-algebra $R$, are the $F \otimes_{\QQ} R$-linear automorphisms $g$ of $\CV \otimes_{\QQ} R$ 
that satisfy $\langle gx, gy \rangle = r \langle x,y \rangle$ for some $r \in R^{\times}$.)

The group $G$ is often called a ``unitary group over $\QQ$'', as (up to a similitude factor) $G(\RR)$ is isomorphic to a product of unitary groups.  More precisely,
for each $\tau$, the pairing $\langle \cdot,\cdot \rangle$ is the imaginary part of a unique Hermitian pairing on the $n$-dimensional $\CC$-vector space $\CV_{\tau}: = \CV \otimes_{F^+,\tau} \RR$.  Let the pair
$(r_{\tau},s_{\tau})$ denote the signature of this pairing.  Then we have an embedding:
$$G(\RR) \rightarrow \prod_{\tau} \operatorname{GU}(r_{\tau},s_{\tau}),$$
whose image is those elements of the product for which the ``similitude factor'' is independent of $\tau$.

The action of $E$ on $\CV$ gives rise to a map $h: \Res_{E/\QQ} \GG_m \rightarrow G,$ and if $h_{\RR}$ is the base change of $h$ to $\RR$, then the pair $(G,h_{\RR})$ is a Shimura datum on $G$.  In particular,
if $X$ denotes the $G(\RR)$-conjugacy class of $h_{\RR}$, then for any compact open subgroup $U$ of $G(\AA^{\infty}_{\QQ})$, the double quotient $G(\QQ) \backslash G(\AA^{\infty}_{\QQ}) \times X/U$ has the
natural structure of an algebraic variety over $\CC$, called the unitary Shimura variety attached to this Shimura datum.

Fix a prime $p$ that is unramified in $F$ and split in $E$, and an embedding of $\overline{\QQ}_p$ in $\CC$.  Then each embedding $p_{\tau}$ or $q_{\tau}$ of $F$ in $\CC$ gives rise to an embedding
of $F$ in $\overline{\QQ}_p$, and hence also a map $\OO_F \rightarrow \overline{\FF}_p$.  We somewhat abusively use $p_{\tau}$ and $q_{\tau}$ to denote these latter maps as well.
Once we have fixed this choice, the above Shimura variety admits (for suitable subgroups $U$)
a canonical model over $W(\overline{\FF}_p)$ that admits a natural description in terms of moduli of abelian varieties with certain PEL structure.

To describe this model, and its associated moduli problem, we first fix a ${\hat Z}$-lattice
${\hat \CT}$ in $\CV \otimes_{\QQ} {\mathbb A}_{\QQ}^{\infty}$, stable under the action of $\OO_F$, such that the cokernel of the induced map ${\hat \CT} \rightarrow \Hom({\hat \CT}, \hat \ZZ)$ induced by the pairing
$\langle \cdot,\cdot \rangle$ has order prime to $p$.  Let $U_p$ be the subgroup of $G(\QQ_p)$ preserving ${\hat \CT}_p$, and let $U$ be any compact open subgroup of $G(\AA^{\infty}_{\QQ})$ of the form
$U_p U^p$, where $U^p$ is a sufficiently small compact open in $G(\AA^{\infty,p}_{\QQ})$ that preserves ${\hat \CT}^{(p)}$.  
Then the Shimura variety $Y_U$ whose complex points are the double quotient $G(\QQ) \backslash G(\AA^{\infty}_{\QQ}) \times X /U$
admits an integral model over $W(\overline{\FF}_p)$, such that for any $W(\overline{\FF}_p)$-algebra $R$, the $R$-points of $Y_U$ are isomorphism classes of tuples $(A,\lambda,\rho)$, where:

\begin{enumerate}
\item $A$ is an abelian scheme over $R$ of dimension $nd$, where $d = [F^+:\QQ]$, with an action of $\OO_F$, such that for all $\alpha \in \OO_F$, the characteristic polynomial of $\alpha$, considered as an endomorphism
of $\Lie(A/R)$, is given by:
$$\prod_{\tau} (X - p_{\tau}(\alpha))^{r_{\tau}}(X - q_{\tau}(\alpha))^{s_{\tau}},$$
\item $\lambda$ is a polarization of $A$, of degree prime to $p$, such that the Rosati involution associated to $\lambda$ induces complex conjugation on $\OO_F \subseteq \End(A)$, and
\item $\rho$ is a ``$U$-level structure'' on $A$; that is, a $U$-orbit of isomorphisms (of {\'e}tale sheaves on $\Spec R$) from the constant sheaf ${\hat T}^{(p)}$ to the product
$$\prod_{\ell \neq p} T_{\ell} A$$
of the $\ell$-adic Tate-modules of $A$, compatible with the action of $\OO_F$, and the pairings on ${\hat T}^{(p)}$ and $T_{\ell} A$ for all $\ell$.
\end{enumerate}

The scheme $Y_U$ is then smooth over $W(\overline{\FF}_p)$ of relative dimension $\sum_{\tau} r_{\tau} s_{\tau}$.

Note that for any $W(\overline{\FF}_p)$-algebra $R$, and any $R \otimes_{\ZZ} \OO_F$-module $M$, we have a natural decomposition:
$$M \cong \bigoplus_{\tau} M_{p_{\tau}} \oplus M_{q_{\tau}},$$
where $M_{p_{\tau}}$ (resp. $M_{q_{\tau}}$) is the submodule of $M$ on which $\OO_F$ acts via the map $p_{\tau}: \OO_F \rightarrow W(\overline{\FF}_p)$ (resp. $q_{\tau}: \OO_F \rightarrow W(\overline{\FF}_p)$).
In particular, if $x$ is an $R$-point of $Y_U$, and $A_x$ is the corresponding moduli object, then $\Lie(A_x/R)$ admits such a decomposition.  The condition on the characterstic polynomial of the action of
$\OO_F$ on $\Lie(A_x/R)$ then shows that for each $\tau$, $\Lie(A_x/R)_{p_{\tau}}$ is a locally free $R$-module of rank $r_{\tau}$, and $\Lie(A_x/R)_{q_{\tau}}$ is likewise locally free over $R$ of rank $s_{\tau}$.

This has useful implications for the structure of the Dieudonn{\'e} modules of moduli objects for $Y_U$.
Let $k$ be an algebraically closed field of characteristic $p$, and $x$ a $k$-point of $Y_U$; then we denote by $\CD(A_x)$ the contravariant Dieudonn{\'e} module of the $p$-divisible group of $A_x$.  It is
a free $W(k)$-module of rank $2nd$, equipped with two natural endomorphisms $F$ and $V$, such that $FV= VF = p$, and
such that for any element $r$ of $W(k)$, and any $d \in \CD(A_x)$, we have $F(rd) = r^{\sigma} F(d)$ and $V(r^{\sigma}d) = rV(d)$, where $\sigma$ is the Frobenius automorphism of $W(k)$.
Moreover, we have a natural isomorphism $\CD(A_x)/p\CD(A_x) \cong H^1_{\mbox{DR}}(A_x/k)$, and under this isomorphism, the subspace $\Lie(A_x/k)^{\vee}$ of $H^1_{\mbox{DR}}(A_x/k)$ is
identified with the image of $V$ in $\CD(A_x)/p\CD(A_x)$.

The $\OO_F$-action on $A_x$ induces a corresponding action on $\CD(A_x)$, and hence we have summands $\CD(A_x)_{p_{\tau}}$ and $\CD(A_x)_{q_{\tau}}$ of $\CD(A_x)$ for all $\tau$.  The polarization $\lambda$
identifies $\CD(A_x)_{p_{\tau}}$ with the $W(k)$-dual of $\CD(A_x)_{q_{\tau}}$; this duality interchanges the endomorphisms $F$ and $V$.

Each summand $\CD(A_x)_{p_{\tau}}$ and $\CD(A_x)_{q_{\tau}}$ is a free $W(k)$-module of rank $n$.
The semilinearity properties of $F$ and $V$ imply that $F$ maps the summand $\CD(A_x)_{p_{\tau}}$ to $\CD(A_x)_{p_{\sigma \tau}}$, where $\sigma \tau$ is the composition of $\tau: \OO_{F^+} \rightarrow W(\overline{\FF_p})$
with the Frobenius automorphism of $W(\overline{\FF}_p)$.  Similarly $V$ maps $\CD(A_x)_{p_{\sigma\tau}}$ to $\CD(A_x)_{p_{\tau}}$.  The identification of the image of $V$ (modulo $p$) with the dual of $\Lie(A_x/k)$
then shows that the quotient $\CD(A_x)_{p_{\tau}}/V \CD(A_x)_{p_{\sigma\tau}}$ has dimension $n - r_{\tau} = s_{\tau}$.

\section{The $U(2)$ Shimura curve}
We now specialize to the particular case of interest to us.  Fix a prime $p$, and choose $E$ and $F^+$ so that $p$ splits in both, with $[F^+:\QQ] = 2$.  Let $\tau_0,\tau_1$ be the two real embeddings of
$F^+$; we will henceforth write $p_0,q_0,p_1,q_1$ for $p_{\tau_0},q_{\tau_0},$ and so forth.  Let $\pi_0$ and $\pi_1$ be the kernels of the maps $p_0,p_1: \OO_F \rightarrow \overline{\FF}_p$, and let $\pi'_0$ and
$\pi'_1$ denote their complex conjugates; the ideal $p$ of $\OO_F$ then factors as $\pi_0 \pi_1 \pi_0' \pi_1'$.  Finally, we choose $\CV$ so that $(r_0,s_0) = (2,0)$ and $(r_1,s_1) = (1,1)$.
(The existence of such a $\CV$ follows, for instance, from Lemma 1.1 in \cite{HT}, taking the division algebra $B$ to be a matrix algebra.)

In this case, for suitable $U$, the Shimura variety $Y_U$ is a Shimura variety of ``Kottwitz-Harris-Taylor type''.  Moreover, it is smooth of dimension $2\cdot 0 + 1\cdot1 = 1$; 
in particular its fiber over $\overline{\FF}_p$ is a smooth (not necessarily connected) curve.  Moreover, we have:

\begin{lemma} Under the above assumptions, $Y_U$ is proper over $W(\overline{\FF}_p)$.
\end{lemma}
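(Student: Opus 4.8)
The plan is to reduce the statement to the standard fact that an integral model of a PEL-type Shimura variety of this kind is proper over $W(\overline{\FF}_p)$ exactly when the derived group of the associated reductive $\QQ$-group is anisotropic over $\QQ$, and then to read off this anisotropy from the signature conditions we have imposed on $\CV$.

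First, recall from Kottwitz~\cite{Ko} that $Y_U$ is a quasi-projective $W(\overline{\FF}_p)$-scheme, smooth under our unramifiedness hypotheses, and that it is proper over $W(\overline{\FF}_p)$ once $G^{\mathrm{der}}$ is $\QQ$-anisotropic. I would make this implication explicit through the valuative criterion. Given a discrete valuation ring $R$ with fraction field $L$ and an $L$-point $(A_\eta,\lambda,\rho)$ of $Y_U$, after replacing $R$ by a finite extension the abelian variety $A_\eta$ acquires semi-abelian reduction; the toric part $T$ of the special fibre of its N{\'e}ron model is stable under the $\OO_F$-action and compatible with the prime-to-$p$ polarization $\lambda$. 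Via the level structure $\rho$ and the degeneration theory of polarized abelian varieties with endomorphisms, a nonzero such $T$ yields a nonzero proper $\OO_F$-stable totally isotropic $F$-subspace of $\CV$, hence a proper $\QQ$-rational parabolic subgroup of $G$, contradicting the $\QQ$-anisotropy of $G^{\mathrm{der}}$. Thus $T=0$, so $A_\eta$ extends to an abelian scheme over $R$ and the given point of $Y_U$ extends; separatedness (uniqueness of the extension) is immediate from the moduli description together with the rigidity of abelian schemes carrying a prime-to-$p$ polarization and a sufficiently small level structure. It therefore suffices to prove that $G^{\mathrm{der}} \cong \Res_{F^+/\QQ}\operatorname{SU}(\CV)$ is $\QQ$-anisotropic, equivalently that the Hermitian form on $\CV$ for the CM extension $F/F^+$ is anisotropic over $F$.

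Second, I would verify this anisotropy directly. Since $(r_0,s_0) = (2,0)$, the Hermitian pairing $h_{\tau_0}$ on the two-dimensional $\CC$-vector space $\CV_{\tau_0} = \CV \otimes_{F^+,\tau_0}\RR$, of which $\langle\cdot,\cdot\rangle$ is the imaginary part, is definite and in particular anisotropic over $\RR$ (viewed as the completion of $F^+$ at $\tau_0$). By the Hasse principle for Hermitian forms over a number field (Landherr's theorem) --- a Hermitian form over $F/F^+$ is isotropic if and only if it is isotropic at every place of $F^+$ --- a form that is anisotropic at even a single place is anisotropic globally. Hence the Hermitian form on $\CV$ is anisotropic over $F$, so $\operatorname{U}(\CV)$ has Witt index $0$ and is $F^+$-anisotropic, and therefore $G^{\mathrm{der}} \cong \Res_{F^+/\QQ}\operatorname{SU}(\CV)$ is $\QQ$-anisotropic. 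Combined with the first step, $Y_U$ is proper over $W(\overline{\FF}_p)$.

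I expect the real content to sit entirely in the first step: converting ``$G^{\mathrm{der}}$ anisotropic'' into ``$Y_U$ satisfies the valuative criterion'' relies on the degeneration theory of abelian varieties with PEL structure (Mumford's construction, or the arithmetic boundary charts of Lan), which is standard in this setting but is where all the substantive work lives; the anisotropy in the second step is essentially formal once one notices that one of the two archimedean signatures is definite. An alternative organization that sidesteps degeneration theory is to quote compactness of the complex Shimura variety (Baily--Borel, via Godement's criterion: $G^{\mathrm{der}}(\QQ)$ has no nontrivial unipotent elements because $G^{\mathrm{der}}$ is $\QQ$-anisotropic, so every arithmetic quotient of $X$ is compact) and then invoke the projectivity of Kottwitz's integral model in the case of compact generic fibre.
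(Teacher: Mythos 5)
Your argument is correct, but it takes a genuinely different route from the paper. The paper gives a completely local, elementary argument via the valuative criterion: if $A$ failed to have potentially good reduction, the character group $X$ of the toric part of the semi-abelian reduction would be a nonzero torsion-free $\OO_F$-module, hence of $\ZZ$-rank at least $[F:\QQ] = 4 = \dim A$. Thus the special fibre would be purely toric with $X$ locally free of rank one over $\OO_F$, so its Lie algebra would be free of rank one over $\OO_F \otimes k$, forcing the signature condition $r_0 = s_0 = r_1 = s_1 = 1$, which contradicts $(r_0,s_0)=(2,0)$. No Hermitian form classification, no group-theoretic anisotropy, and no degeneration/compactification machinery is needed.

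You instead invoke the general principle that PEL models are proper when $G^{\mathrm{der}}$ is $\QQ$-anisotropic, establish the anisotropy via Landherr's Hasse principle for Hermitian forms using the definite place $\tau_0$, and then close the loop either by degeneration theory (toric part $\leadsto$ isotropic $F$-subspace $\leadsto$ $\QQ$-rational parabolic) or by Godement's criterion plus projectivity of Kottwitz's model. This is correct and has the virtue of generality: it makes transparent that the result holds for any signature with at least one definite archimedean place, regardless of $n$. The trade-off is that your first step — converting anisotropy into the valuative criterion — rests on substantial black boxes (Mumford/Lan degeneration theory, or Baily--Borel plus projectivity of the integral model), whereas the paper closes the argument with nothing more than the dimension count $\dim A = [F:\QQ]$ (which is special to $n=2$ and forces the degenerate fibre to be purely toric) and the Lie-algebra eigenspace condition already built into the moduli problem. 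In short: your proof explains \emph{why} properness holds in conceptual terms; the paper's proof is shorter and self-contained in this particular low-dimensional case.
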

\begin{proof}
Let $R$ be a discrete valuation ring over $W(\overline{\FF}_p)$, with field of fractions $K$, and let $(A,\lambda,\rho)$ be a $K$-point of $Y_U$.  It suffices to show that $A$ has potentially good reduction, i.e. that
there exists a finite extension $K'$ of $K$,
such that $A_{K'}$ has good reduction.  Indeed, if this holds then $A$ extends to an abelian scheme over the integral closure $R'$ of $R$ in $K'$, and the $\OO_F$-action, as well as the
structures $\lambda$ and $\rho$ likewise extend.  Thus $Y_U$ is proper by the valuative criterion.  

Suppose that $A$ does not have potentially good reduction.  Then $A$ has potentially multiplicative reduction over $K$, so there exists a finite extension $K'$ of $K$ such that $A$ extends to a semiabelian scheme over $R'$ whose special
fiber is the extension of an abelian variety by a torus.  Let $X$ be the character group of the toric part of this special fiber.  Then $\OO_F$ acts on $X$, so $X$ has rank at least four as a $\ZZ$-module.  Since
$A$ has dimension four, it follows that the special fiber is purely toric, and that $X$ is a locally free $\OO_F$-module of rank one.  But then it would follow immediately that the Lie algebra of this special
fiber is free of rank one over $\OO_F \otimes R/{\mathfrak m}_R$, and hence we would have $r_0 = s_0 = r_1 = s_1 = 1$, contrary to our assumptions.
\end{proof}

Now let $k$ be an algebraically closed field of characteristic $p$, and $x$ a $k$-point of $Y_U$.  We then have a decomposition:
$$\CD(A_x) \cong \CD(A_x)_{p_0} \oplus \CD(A_x)_{p_1} \oplus \CD(A_x)_{q_0} \oplus \CD(A_x)_{q_1},$$
with each summand a free $W(k)$-module of rank two.  Since $p$ splits completely in $F$ each summand is stable under $F$ and $V$, and in fact $\CD(A_x)_{p_i}$ is the (contravariant) Dieudonn{\'e} module
of the $p$-divisible group $A[\pi_i^{\infty}]$ for $i=0,1$.  Similarly $\CD(A_x)_{q_i}$ is the Diedudonn{\'e} module of $A_x[(\pi'_i)^{\infty}]$, which is Cartier dual to $A_x[\pi_i^{\infty}]$.  Since $(r_0,s_0) = (2,0)$,
we see that $F$ is an isomorphism on $\CD(A_x)_{p_0}$ and $V$ is an isomorphism on $\CD(A_x)_{q_0}$, so that $A_x[\pi_0^{\infty}]$ and $A_x[(\pi'_0)^{\infty}]$ are {\'e}tale and multiplicative, respectively.
By contrast, since $(r_1,s_1) = (1,1)$, both $A_x[\pi_1^{\infty}]$ and its Cartier dual $A_x[(\pi'_1)^{\infty}]$ are isomorphic to the $p$-divisible group of some elliptic curve.

There are thus only two possibilities for the Newton polygon of $A[p^{\infty}]$, depending on whether the $p$-divisible group $A[\pi_1^{\infty}]$ is that of an ordinary or supersingular elliptic curve.  In the
former case $A_x$ is an ordinary abelian variety; in the latter $A_x$ has $p$-rank two.  The ordinary locus in $(Y_U)_{\overline{\FF}_p}$ is open and dense, and the $p$-rank two locus is a proper closed subvariety, which
is necessarily zero-dimensional.  Moreover, it follows from general results on non-emptiness of Newton strata for Shimura varieties that the $p$-rank two locus is nonempty; in the PEL case under consideration this 
is Theorem 1.6 of Viehmann-Wedorn~\cite{VW13}; for abelian varieties of Hodge type this is a recent result of Kisin, Madapusi Pera, and Shin~\cite{KMPS}.  (It is likely that in the particular case under consideration 
one could prove this nonemptiness directly, and more simply, via Honda-Tate theory, but we do not attempt this.) 

\section{Proof of Theorem~\ref{thm:main}} \label{sec:main}
Fix a connected component $C$ of $(Y_U)_{\overline{\FF}_p}$ that contains at least one $\overline{\FF}_p$-point $x$ such that $A_x$ has $p$-rank two.  Let $\eta$ denote the generic point of $C$, let $K$ denote
the residue field of $\eta$, and let $A$ be the base change to $K$ of the universal abelian scheme on $Y_U$.  Since the ordinary locus is open and dense in $C$, $A$ is an ordinary abelian variety over $K$.

Moreover, it is not hard to construct an {\'e}tale self-isogeny of $A$ of $p$-power degree.  Indeed, for some $r$ the ideal $\pi_1^r$ is a principal ideal of $\OO_F$, generated by an element $\alpha$ of $\OO_F$.
Then the kernel of multiplication by $\alpha$ is a subgroup of the $p$-divisible group $A[\pi_1^{\infty}]$, and our Dieudonn{\'e} theory calculations show that
this $p$-divisible group is {\'e}tale.

We are thus reduced to showing:

\begin{lemma} The abelian variety $A/K$ has no isotrivial factors.
\end{lemma}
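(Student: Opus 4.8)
The plan is to argue by contradiction, leveraging the two structures that $A$ inherits from the moduli problem: a faithful action of $\OO_F$ with $[F:\QQ]=4$, and a polarization $\lambda$ whose Rosati involution induces complex conjugation on $\OO_F$; recall also $\dim A=4$. \textbf{Step 1: $A$ itself is not isotrivial.} If it were, then after a finite extension of $K$ --- which has the form $\overline{\FF}_p(C')$ for a smooth projective curve $C'$ with a finite surjective morphism $C'\to C$ --- the pullback of the universal abelian scheme to $C'$ would become isogenous to a constant abelian scheme $A_0\times_{\overline{\FF}_p}C'$. That isogeny, a priori defined over the generic point of $C'$, extends over all of $C'$ by the N\'eron mapping property, using that $Y_U$ (hence $C'$) is proper so that both sides are abelian schemes over the proper smooth curve $C'$. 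The $p$-rank of a fibre is an isogeny invariant, so all fibres of $A_0\times C'$ have the same $p$-rank; but $C'\to C$ is surjective and the universal family has $p$-rank $4$ over the dense ordinary locus of $C$ and $p$-rank $2$ over $x$, a contradiction.

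\textbf{Step 2: setting up an $\OO_F$-stable decomposition.} Now suppose $A$ has a nonzero isotrivial factor; then it has a nonzero isotrivial abelian subvariety, and I take $A^{\mathrm{iso}}\subseteq A$ to be the largest such, namely the sum of all isotrivial abelian subvarieties. This $A^{\mathrm{iso}}$ is again isotrivial (a quotient of a product of isotrivial abelian varieties), it is nonzero by hypothesis, and it is not all of $A$ by Step 1. It is $\OO_F$-stable: for $\alpha\in\OO_F$ the image $\alpha(A^{\mathrm{iso}})$ is a quotient of $A^{\mathrm{iso}}$, hence isotrivial, hence contained in $A^{\mathrm{iso}}$. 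Let $A'\subseteq A$ be the orthogonal complement of $A^{\mathrm{iso}}$ with respect to $\lambda$; since the Rosati involution preserves $\OO_F$, $A'$ is again $\OO_F$-stable, $A$ is isogenous to $A^{\mathrm{iso}}\times A'$ (Poincar\'e reducibility), and $A'\neq 0$ (otherwise $A$ would be isotrivial).

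\textbf{Step 3: a dimension count, and the contradiction.} The key observation is that any nonzero $\OO_F$-stable abelian subvariety $B\subseteq A$ has $\dim B\geq 2$. Indeed $\OO_F$ acts faithfully on $B$: the kernel of $\OO_F\to\End(B)$ is a proper ideal of $\OO_F$, and a nonzero one would force a nonzero finite ring to embed into the torsion-free ring $\End(B)$. Hence $F=\OO_F\otimes\QQ$ embeds in $\End(B)\otimes\QQ$, so for any prime $\ell\neq p$ the commutative semisimple $\QQ_\ell$-algebra $F\otimes\QQ_\ell$ embeds in $\operatorname{End}_{\QQ_\ell}(V_\ell(B))$, where $V_\ell(B)$ is the rational $\ell$-adic Tate module, of $\QQ_\ell$-dimension $2\dim B$; since a commutative semisimple subalgebra of $M_m(\QQ_\ell)$ has $\QQ_\ell$-dimension at most $m$, this gives $4=[F:\QQ]\leq 2\dim B$. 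Applying this to $A^{\mathrm{iso}}$ and $A'$ and using $\dim A^{\mathrm{iso}}+\dim A'=4$ forces $\dim A^{\mathrm{iso}}=\dim A'=2$. But then $F$ is a CM field embedded in $\End(A')\otimes\QQ$ with $[F:\QQ]=4=2\dim A'$, so $A'$ is an abelian surface with complex multiplication by $F$; by Grothendieck's theorem, an abelian variety with complex multiplication over a field of characteristic $p$ becomes, after a finite extension of the base, isogenous to an abelian variety defined over $\overline{\FF}_p$. Thus $A'$ is isotrivial, hence so is $A^{\mathrm{iso}}\times A'$, hence so is $A$ --- contradicting Step 1. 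Therefore $A$ has no isotrivial factors.

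The steps that are not pure formalism, and where I expect the genuine work to lie, are the extension of the isogeny across the proper curve $C'$ in Step 1 (this really does use properness of $Y_U$, established above) and the correct invocation of Grothendieck's descent theorem for abelian varieties with complex multiplication in Step 3; the elementary dimension count in Step 3 is then precisely the mechanism that upgrades ``$A$ has an isotrivial factor'' to ``$A$ is isotrivial.''
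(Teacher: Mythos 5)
Your proof is correct, but it takes a genuinely different route from the paper's. The paper argues \emph{locally} at the supersingular point $x$: it replaces a putative isotrivial factor $B$ by the $\OO_F$-stable image $B'$ of $B\otimes_{\ZZ}\OO_F\to A$, observes that $B'$ has everywhere ordinary reduction (since $B$ is isotrivial and generically ordinary), and then notes that an ordinary $\OO_F$-stable abelian subvariety of $A_x$ must have vanishing $\pi_1$-divisible group because $A_x[\pi_1^\infty]$ is supersingular; a short Dieudonn\'e-module argument then kills $B'_x$, hence $B'$, hence $B$. Your proof instead argues \emph{globally} in two stages: Step 1 rules out $A$ itself being isotrivial via the $p$-rank jump between the ordinary locus and the supersingular point (so the two proofs use the supersingular point $x$ in the same essential way, just packaged differently), and then Steps 2--3 use Poincar\'e reducibility together with the polarization and the Rosati involution to produce an $\OO_F$-stable complement $A'$, a Tate-module dimension count to force $\dim A^{\mathrm{iso}}=\dim A'=2$, and finally Grothendieck's isogeny theorem for CM abelian varieties in characteristic $p$ to conclude that $A'$ (hence $A$) is isotrivial, contradicting Step 1. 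What your approach buys is a clean conceptual statement --- the $\OO_F$-action of degree $4$ forces any proper nonzero $\OO_F$-stable piece of the $4$-dimensional $A$ to have sufficiently many complex multiplications, so $A$ is $\OO_F$-simple as soon as it is not isotrivial --- at the price of invoking Grothendieck's theorem as a black box. The paper's argument is more self-contained, leaning only on the Dieudonn\'e-theoretic description of $A_x[\pi_1^\infty]$ already developed in Section 3, and avoids the reduction to the CM case entirely. One small wording point: Grothendieck's theorem gives an isogeny, after finite base extension, to an abelian variety defined over a \emph{finite} field; the passage to $\overline{\FF}_p$ is then a further base change, which is harmless here but worth stating precisely.
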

\begin{proof}
We will show that there is no pair $(B,\phi)$ with $B$ isotrivial over $K$ and $\phi: B \rightarrow A$ a morphism of abelian varieties with finite kernel.  Suppose that such a pair exists.  Since $A$ is ordinary,
$B$ must be ordinary as well.  Moreover, it is not hard to see that $B$ has good reduction at every point $y$ of $C$: this is certainly true for the image of $B$ in $A$, as $A$ has everywhere good
reduction; since $B$ is isogenous to its image in $A$ we deduce that $B$ has good reduction everywhere as well.  As $B$ is isotrivial and ordinary, the reduction $B_y$ must also be ordinary.

Consider the natural map 
$$\phi \otimes_{\ZZ} \OO_F: B \otimes_{\ZZ} \OO_F \rightarrow A.$$
As $B \otimes_{\ZZ} \OO_F$ has ordinary reduction everywhere, the image of this map is an $\OO_F$-stable subvariety $B'$ of $A$ with everywhere ordinary reduction.  In particular $B'_x$ is an ordinary, $\OO_F$-stable
abelian subvariety of $A_x$.  Since $A_x[\pi_1^{\infty}]$ is the $p$-divisible group of a supersingular elliptic curve, we have $B'_x[\pi_1^{\infty}] = 0$.  But then $\End_{\OO_F}(B'_x)$ is a 
torsion-free, finite rank $\OO_F$-module whose completion at $\pi_1$ vanishes.  We thus conclude that $\End_{\OO_F}(B'_x) = 0$, and hence that $B'_x = 0$.  Thus $B' = 0$ and therefore $B=0$ as well.
\end{proof}

\begin{rem} Since the Shimura variety $Y_U$ is proper, the abelian variety $A$ has everywhere good reduction.  Note that this is consistent with Theorem 1.2 (a) of~\cite{Ro}.
\end{rem}

\end{document}